\newtheorem{thm}{Theorem}[section]
\newtheorem{lem}[thm]{Lemma}
\newtheorem{prop}[thm]{Proposition}
\newtheorem{cor}[thm]{Corollary}
\theoremstyle{definition}
\newtheorem{dfn}[thm]{Definiton}
\theoremstyle{remark}
\newcommand{\set}[1]{\{#1\}}
\newcommand{\ga}{\gamma}
\newcommand{\de}{\delta}
\newcommand{\e}{\epsilon}
\newcommand{\N}{\mathbb{N}}
\newcommand{\Z}{\mathbb{Z}}
\newcommand{\CM}{\'Ciri\'c-Matkowski}
\newcommand{\m}{m}
\numberwithin{equation}{section}
\title[Fixed point theorems in generalized metric spaces]%
{Fixed point theorems of \'Ciri\'c-Matkowski type\\in generalized metric spaces}
\author[M. Abtahi]{Mortaza Abtahi}
\date{\today}
\begin{document}

\begin{abstract}
  A self-map $T$ of a $\nu$-generalized metric space $(X,d\,)$ is said to be a
  \'Ciri\'c-Matkowski contraction if $d(Tx,Ty)<d(x,y)$, for $x\neq y$, and,
  for every $\e>0$, there is $\de>0$ such that $d(x,y)<\de+\e$ implies $d(Tx,Ty)\leq \e$.
  In this paper, fixed point theorems for this kind of contractions of
  $\nu$-generalized metric spaces, are presented.
  Then, by replacing the distance function $d(x,y)$ with functions of the form $m(x,y)=d(x,y)+\ga\bigl(d(x,Tx)+d(y,Ty)\bigr)$, where $\ga>0$, results analogue to those
  due to P.D.\ Proiniv (Fixed point theorems in metric spaces, Nonlinear Anal. 46 (2006) 546--557)
  are obtained.
\end{abstract}

\maketitle

\section{Introduction}
\label{sec:intro}

Throughout the paper, the set of integers is denoted by $\Z$,
the set of nonnegative integers is denoted by $\Z^+$, and
the set of positive integers is denoted by $\N$.

Fixed point theory in metric spaces have many applications. It is natural that there have
been several attempts to extend it to a more general setting. One of these generalizations
was introduced by Branciari in 2000, where the triangle inequality was replaced by a so-called
\emph{quadrilateral inequality.} They introduced the concept of $\nu$-generalized metric spaces
as follows; see also
\cite{Alamri-Suzuki-Khan,Kadelburg-Radenovic-1,Kirk-Shahzad,Suzuki-Alamri-Khan}.

\begin{dfn}[Branciari \cite{Branciari}]
\label{dfn:nu-generalized-ms}
  Let $X$ be a nonvoid set and $d:X\times X\to[0,\infty)$ be a function. Let $\nu\in\N$.
  Then $(X,d\,)$ is called a \emph{$\nu$-generalized metric space} if the following hold:
  \begin{enumerate}[\quad$(1)$]
    \item $d(x,y)=0$ if and only if $x=y$, for every $x,y\in X$;

    \item $d(x,y)=d(y,x)$, for every $x,y\in X$;

    \item \label{item:nu-angle-inequality}
    $d(x,y) \leq d(x,u_1)+d(u_1,u_2)+\dotsb+d(u_\nu,y)$,
          for every set $\{x,u_1,\dotsc,u_\nu,y\}$ of $\nu+2$
          elements of $X$ that are all different.
  \end{enumerate}
\end{dfn}

Obviously, $(X,d\,)$ is a metric space if and only if it is a $1$-generalized metric space.
In \cite{Alamri-Suzuki-Khan}, the completeness of $\nu$-generalized metric spaces are discussed.
In \cite{Suzuki}, it is shown that not every generalized metric space has the compatible topology.

\begin{dfn}
  Let $(X,d\,)$ be a $\nu$-generalized metric space. Let $k\in\N$.
  A sequence $\{x_n\}$ in $X$ is said to be \emph{$k$-Cauchy} if
    \begin{equation}\label{eqn:k-Cauchy}
      \lim_{n\to\infty} \sup\set{d(x_n,x_{n+1+mk}):m\in \Z^+}=0.
    \end{equation}
  The sequence $\{x_n\}$ is said to be \emph{Cauchy} if it is $1$-Cauchy.
\end{dfn}

The concept of Cauchy sequences in $\nu$-generalized metric spaces are studied
in \cite{Alamri-Suzuki-Khan,Suzuki-Alamri-Khan}; see also \cite{Branciari}.

\begin{prop}[\cite{Alamri-Suzuki-Khan} and \cite{Suzuki-Alamri-Khan}]
\label{prop:nu-Cauchy-is-Cauchy}
  Let $(X,d\,)$ be a $\nu$-generalized metric space and let $\{x_n\}$ be a sequence
  in $X$ such that $x_n\ (n\in\N)$ are all different. Suppose $\{x_n\}$ is
  $\nu$-Cauchy. If $\nu$ is odd, or if $\nu$ is even and $d(x_n,x_{n+2})\to0$,
  then $\{x_n\}$ is Cauchy.
\end{prop}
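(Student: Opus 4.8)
The plan is to reduce the general Cauchy condition to the $\nu$-Cauchy condition by splitting according to the residue of the index gap modulo $\nu$, and then to realize every gap by a single application of the $\nu$-angle inequality whose $\nu+1$ edges are all ``$\nu$-Cauchy-controlled'' jumps. First I would record the hypothesis as a tail bound: set $C_N=\sup\set{d(x_a,x_b):a\geq N,\ b>a,\ b-a\equiv 1\pmod\nu}$, so that $\nu$-Cauchy is exactly the statement $C_N\to0$, and (by symmetry of $d$) $C_N$ bounds $d(x_a,x_b)$ for any two indices $a,b\geq N$ whose difference is $\equiv\pm1\pmod\nu$. In the even case I would also set $D_N=\sup\set{d(x_a,x_{a+2}):a\geq N}$, so that the extra hypothesis $d(x_n,x_{n+2})\to0$ reads $D_N\to0$.

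Next I would make the elementary reduction: every $p>n$ can be written uniquely as $p-n=j+m\nu$ with $j\in\set{1,\dotsc,\nu}$ and $m\geq0$, so $\sup_{p>n}d(x_n,x_p)=\max_{1\leq j\leq\nu}c^{(j)}_n$, where $c^{(j)}_n=\sup\set{d(x_n,x_p):p>n,\ p-n\equiv j\pmod\nu}$. Since a finite maximum of null sequences is null, and since being Cauchy is precisely $\sup_{p>n}d(x_n,x_p)\to0$, it suffices to prove $c^{(j)}_n\to0$ for each fixed $j$. The case $j\equiv1$ is the hypothesis itself, and the task is to reach the remaining residues.

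The core step is a routing construction. Given $n\geq N$ and $p$ with $p-n\equiv j\pmod\nu$, I would exhibit indices $n=s_0,s_1,\dotsc,s_\nu,s_{\nu+1}=p$ that are pairwise distinct (hence giving distinct points, by the standing hypothesis that the $x_n$ are all different), all $\geq N$, and such that each consecutive difference $s_{i+1}-s_i$ is of controlled type; the $\nu$-angle inequality then yields $d(x_n,x_p)\leq\sum_{i=0}^{\nu}d(x_{s_i},x_{s_{i+1}})$, each summand being $\leq C_N$ (or the single exceptional one $\leq D_N$), whence $c^{(j)}_n\leq(\nu+1)C_N\to0$ in the odd case and $c^{(j)}_n\leq\nu C_N+D_N\to0$ in the even case. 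The choice of edges is dictated by a parity count: a controlled $\pm1$-edge changes the index by an amount $\equiv\pm1\pmod\nu$, and a path of $\nu+1$ such edges realizes a net residue equal to a sum of $\nu+1$ signs $\pm1$. When $\nu$ is odd this sum can be made congruent to any prescribed residue, so all $j$ are reachable using only $\nu$-Cauchy edges; when $\nu$ is even the sum of $\nu+1$ signs is always odd, so the even residues are unreachable this way---which is precisely where the hypothesis $d(x_n,x_{n+2})\to0$ enters, supplying one edge of residue $2$ that, together with $\nu$ sign-edges, reaches every even residue.

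The hard part will be the bookkeeping inside this routing construction: after fixing the signs so that the residue is correct, I must still tune the sizes of the individual jumps (each forward jump may be any $1+m\nu$, and backward jumps likewise) so that the net displacement equals exactly $p-n$, while simultaneously keeping all $s_i$ distinct and $\geq N$ and avoiding the collisions that occur for small gaps such as $p=n+2$. I expect this to be handled by inflating the forward jumps and compensating with correspondingly large backward jumps, using the freedom in the multipliers $m$; the numerical content is a statement about representing a given integer as a signed sum of $\nu+1$ terms from $\set{1+m\nu:m\geq0}$ (plus, in the even case, one term equal to $2$) subject to the positivity and distinctness constraints. Once this combinatorial lemma is in place, the three displayed bounds finish the proof.
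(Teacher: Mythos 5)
You should first note that the paper does not prove this proposition at all: it is quoted from \cite{Alamri-Suzuki-Khan} and \cite{Suzuki-Alamri-Khan} and used as a black box (in the proof of Theorem 2.2), so there is no in-paper argument to compare against; your proposal has to stand on its own. On its own terms it is essentially sound, and its skeleton matches what the cited literature actually does: split gaps $p-n$ by residue mod $\nu$, observe that one application of the $\nu$-angle inequality uses \emph{exactly} $\nu+1$ edges (you are right to insist on this --- no polygon inequality with a different number of vertices is available in a $\nu$-generalized metric space), and note the parity obstruction: a path of $\nu+1$ steps each of size $\equiv\pm1\pmod\nu$ has net residue $\sum\epsilon_i$ of parity $\nu+1$, so for even $\nu$ only odd residues are reachable, which is precisely why the hypothesis $d(x_n,x_{n+2})\to0$ is needed to supply the one even-residue edge. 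Your residue-coverage count checks out: for odd $\nu$ the even integers $s$ with $|s|\leq\nu-1$ already cover all residues mod $\nu$, and for even $\nu$ the odd such $s$ cover all odd residues, with $\pm2$ plus $\nu$ signs handling the even ones.

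Two points deserve tightening. First, your blanket claim that $C_N$ bounds $d(x_a,x_b)$ whenever the difference is ``$\equiv\pm1\pmod\nu$'' is false as stated for an unordered pair: control requires the \emph{positive} gap $|b-a|$ to be $\equiv 1\pmod\nu$ (a gap $\equiv\nu-1$ is not controlled); your routing construction only ever uses steps $\pm(1+m_i\nu)$, whose absolute gaps are $\equiv1$, so the construction itself is fine, but the sentence should be corrected. Second, in the deferred combinatorial lemma you should explicitly choose the sign-sum representative $s$ with $|s|\leq\nu-1$ (rather than merely $|s|\leq\nu+1$): the extreme cases $s=\pm(\nu+1)$ force all signs equal, which constrains the compensating integer $K=\sum_{\mathrm{pos}}m_i-\sum_{\mathrm{neg}}m_i$ to one sign and can make the target displacement unreachable; with at least one jump of each sign, $K$ ranges over all of $\Z$ and the ``ascend hugely, then descend'' arrangement keeps all intermediate indices $\geq\min(n,p)\geq N$, with the finitely many up-leg/down-leg collisions dodged by shifting one forward and one backward multiplier by the same amount (the borderline case $\nu$ odd, $p=n+2$ is genuinely routable, e.g.\ for $\nu=3$ via jumps $+1,+4,+1,-4$). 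With those repairs the bounds $c^{(j)}_n\leq(\nu+1)C_N$, resp.\ $\nu C_N+D_N$, do finish the proof, so I would call this a correct outline with a clearly identified, completable gap rather than a complete proof.
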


A sequence $\{x_n\}$ in a $\nu$-generalized metric space $(X,d\,)$ is said
to \emph{converge} to $x$ if $d(x,x_n)\to0$ as $n\to\infty$. The sequence $\{x_n\}$
is said to \emph{converge to $x$ in the strong sense} if $\{x_n\}$ is Cauchy and
$\{x_n\}$ converges to $x$. The space $X$ is said to be \emph{complete} if
every Cauchy sequence in $X$ converges.

\begin{prop}[\cite{Suzuki-Alamri-Khan}]
\label{prop:d-is-continuous}
  Let $\{x_n\}$ and $\{y_n\}$ be sequences in $X$ that
  converge to $x$ and $y$ in the strong sense, respectively. Then
  \[
    d(x,y) = \lim_{n\to\infty} d(x_n,y_n).
  \]
\end{prop}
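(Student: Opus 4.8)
The plan is to trap $d(x,y)$ between $\liminf_n d(x_n,y_n)$ and $\limsup_n d(x_n,y_n)$ by applying the polygonal inequality of Definition~\ref{dfn:nu-generalized-ms} to two chains that join $x$ to $y$ through the tails of the two sequences. The structural subtlety is that this inequality supplies an estimate only for a chain with \emph{exactly} $\nu$ interior vertices, and only when all $\nu+2$ vertices are pairwise distinct; so each chain must be padded up to precisely $\nu$ interior points using extra terms of the sequences, and distinctness has to be arranged deliberately. The legs of a chain other than the single ``diagonal'' $d(x_n,y_n)$ (resp.\ $d(x,y)$) will be driven to $0$ using the convergences $d(x,x_n)\to0$, $d(y,y_n)\to0$ and the Cauchy condition \eqref{eqn:k-Cauchy}, provided all indices involved are taken large simultaneously.

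First I would dispose of the degenerate configurations. Since $\{x_n\}$ is Cauchy, if its range is finite then it is eventually constant: were two tail terms to carry distinct values, their distance would be at least the minimum of the finitely many positive distances between the occurring values, contradicting \eqref{eqn:k-Cauchy}; the eventual value is then forced to be $x$ because $d(x,x_n)\to0$, and likewise for $\{y_n\}$. If both sequences are eventually constant the identity $d(x_n,y_n)=d(x,y)$ holds for large $n$; and if $x=y$ the claim reduces to $d(x_n,y_n)\to0$, obtainable from a single padded chain terminating at $x=y$. Thus I may assume $x\neq y$ and---using that the statement is symmetric under the exchange $(x_n,x)\leftrightarrow(y_n,y)$---that $\{x_n\}$ has infinitely many distinct terms, which is exactly what keeps the padding reservoir infinite.

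For the lower estimate I would fix $\e>0$ and, for all large $n$, use the chain
\[
  x,\; x_{N_1},\; x_{N_2},\; \dots,\; x_{N_{\nu-2}},\; x_n,\; y_n,\; y
\]
(read as $x,x_n,y_n,y$ when $\nu=2$), which the polygonal inequality turns into
\[
  d(x,y)\le d(x,x_{N_1})+d(x_{N_1},x_{N_2})+\dots+d(x_{N_{\nu-2}},x_n)+d(x_n,y_n)+d(y_n,y).
\]
Taking the auxiliary indices $N_1,\dots,N_{\nu-2}$ large makes each of the $\nu$ displayed legs other than $d(x_n,y_n)$ smaller than $\e/\nu$ (by convergence and \eqref{eqn:k-Cauchy}), so $d(x,y)\le\e+d(x_n,y_n)$ for all large $n$; letting $n\to\infty$ and then $\e\to0$ gives $d(x,y)\le\liminf_n d(x_n,y_n)$. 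The mirror chain $x_n, x_{M_1},\dots,x_{M_{\nu-2}}, x, y, y_n$ yields $\limsup_n d(x_n,y_n)\le d(x,y)$ in the same way, and the two inequalities together give the assertion. For $\nu=1$ the space is an ordinary metric space and the result is the classical continuity of the distance, which I would record separately as a base case.

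The crux---and the only real obstacle---is securing the distinctness of the $\nu+2$ vertices at each stage. I would resolve it through the freedom in the auxiliary indices: because every tail $\{x_m:m\ge M\}$ of a sequence with infinitely many distinct terms still realizes infinitely many values, I can choose $N_1,\dots,N_{\nu-2}$ (and $M_1,\dots,M_{\nu-2}$) with arbitrarily large indices whose terms are pairwise distinct and avoid the finite forbidden set $\{x,y,x_n,y_n\}$. It remains to guarantee $x_n\neq y_n$; but if $x_n=y_n$ occurred along a subsequence, the chain $x, x_{N_1},\dots,x_{N_{\nu-1}}, x_n, y$ would give $d(x,y)\le(\text{small})+d(x_n,y)=(\text{small})+d(y_n,y)\to0$, contradicting $x\neq y$, so $x_n\neq y_n$ for all large $n$. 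Since each selection amounts to avoiding finitely many points against an infinite supply of admissible terms, it always succeeds, and the limit passages above are justified.
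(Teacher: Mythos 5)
The paper itself gives no proof of this proposition (it is imported from \cite{Suzuki-Alamri-Khan}), so your argument must stand on its own. Your chain-padding strategy is the right one, and much of the bookkeeping is sound: the per-$n$ choice of auxiliary indices, the observation that a Cauchy sequence with finite range is eventually constant, and the $\nu=1$ and $x=y$ base cases. But there is a genuine gap at exactly the point you declare resolved. After choosing the padding $x_{N_1},\dotsc,x_{N_{\nu-2}}$ to avoid the forbidden set $\{x,y,x_n,y_n\}$, you assert that ``it remains to guarantee $x_n\neq y_n$.'' That is not all that remains: your chains also require $x_n\neq x$ and $y_n\neq y$ (as well as $x_n\neq y$ and $y_n\neq x$), and no choice of auxiliary indices can enforce these, because $x_n$ and $y_n$ are forced vertices of the chain. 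The coincidences $x_n=x$ or $y_n=y$ can occur for infinitely many $n$ even under strong convergence (e.g.\ $x_{2k}=x$ with the odd-indexed terms pairwise distinct and tending to $x$), and for such $n$ the $(\nu+2)$-point configuration in property (\ref{item:nu-angle-inequality}) of Definition~\ref{dfn:nu-generalized-ms} is not a set of pairwise distinct points, so the inequality simply does not apply; in a $\nu$-generalized metric space there is no substitute estimate for chains with repeated vertices, which is precisely the pathology that invalidated Branciari's original argument. Consequently the bound $d(x,y)\le\epsilon+d(x_n,y_n)$ (and its mirror for the upper estimate) is unproved on a possibly infinite set of indices, and that set may be where $\liminf_n d(x_n,y_n)$ (resp.\ the $\limsup$) is realized. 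The same oversight infects your sub-argument ruling out $x_n=y_n$, whose chain again needs $x_n\neq x$.

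The gap is repairable within your own framework, but only by an explicit case analysis you did not supply. If $x_n=x$ and $y_n=y$, the desired inequality is the identity $d(x_n,y_n)=d(x,y)$. If exactly one coincidence occurs, say $x_n=x$ but $y_n\neq y$, then either $\{y_n\}$ has finite range --- in which case, by your own finite-range observation, it is eventually constant equal to $y$ and this subcase arises only finitely often --- or $\{y_n\}$ has infinitely many distinct values in every tail, and one runs the chain $x,\,y_n,\,y_{M_1},\dotsc,y_{M_{\nu-1}},\,y$ padded from the \emph{other} sequence, whose legs apart from the diagonal $d(x,y_n)=d(x_n,y_n)$ are small by \eqref{eqn:k-Cauchy} and by $d(y,y_m)\to0$; the subcase $y_n=y$, $x_n\neq x$ and the upper estimate are handled symmetrically. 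The pairs $x_n=y$ and $y_n=x$ are harmless, since infinitely many occurrences would force $d(x,y)=0$, contrary to $x\neq y$. Finally, in the $x_n=y_n$ sub-argument, note that $x_n=y_n=x$ infinitely often likewise forces $x=y$, so you may assume $x_n\neq x$ along the relevant subsequence. With these cases added, your proof closes and agrees in method with the cited one.
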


Branciari, in [1], proved a generalization of the Banach contraction principle.
As it is mentioned in \cite{Alamri-Suzuki-Khan}, their proof is not correct
because a $\nu$-generalized metric space does not necessarily have the compatible
topology; see \cite{Kadelburg-Radenovic-2}, \cite{Samet,Sarma,Suzuki}
and \cite{Turinici}.
A proof of the Banach contraction principle, as well as proofs of Kannan's and
\'Ciri\'c's fixed point theorems, in $\nu$-generalized metric spaces,
can be found in \cite{Suzuki-Alamri-Khan}.

\begin{thm}[\cite{Suzuki-Alamri-Khan}]
  Let $X$ be a complete $\nu$-generalized metric space, and let
  $T$ be a self-map of $X$. For every $x,y\in X$, let
  \begin{equation}
    m(x,y) = \max\set{d(x,y),d(x,Tx),d(y,Ty),d(x,Ty),d(y,Tx)}.
  \end{equation}
  Assume there exists $r\in[0,1)$ such that $d(Tx,Ty) \leq r m(x,y)$,
  for all $x,y\in X$. Then $T$ has a unique fixed point $z$ and, moreover,
  for any $x\in X$, the Picard iterates $T^n x$ $(n\in\N)$ converge to $z$ in the strong sense.
\end{thm}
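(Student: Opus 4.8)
The plan is to fix an arbitrary $x_0\in X$, put $x_n=T^nx_0$, and analyse the orbit $\{x_n\}$. If $x_n=x_{n+1}$ for some $n$, then $x_n=Tx_n$ is a fixed point and the sequence is eventually constant, so it converges strongly to $x_n$; more generally, a short preliminary argument shows that a coincidence $x_i=x_j$ with $i<j$ forces a fixed point, since $T$ then cyclically permutes the finite set $C=\{x_i,\dots,x_{j-1}\}$, and applying the contraction to a pair realizing $\max\{d(u,v):u,v\in C\}$ (which is itself an image pair) gives $\diam C\leq r\,\diam C$, hence $\diam C=0$. I may therefore assume that the points $x_n$ are pairwise distinct, which is exactly what permits later use of the $\nu$-gon inequality on them.

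The engine of the argument is the family of tail diameters $\delta_n=\sup\{d(x_i,x_j):i,j\geq n\}$. For $i,j\geq n+1$ one has $d(x_i,x_j)=d(Tx_{i-1},Tx_{j-1})\leq r\,m(x_{i-1},x_{j-1})$, and each of the five distances defining $m(x_{i-1},x_{j-1})$ joins points of index at least $n$; hence $m(x_{i-1},x_{j-1})\leq\delta_n$, and taking the supremum yields $\delta_{n+1}\leq r\delta_n$. Consequently, as soon as a single $\delta_N$ is finite, $\delta_n\leq r^{\,n-N}\delta_N\to0$, and this alone makes $\{x_n\}$ Cauchy: given $\e>0$, choose $n$ with $\delta_n<\e$, so that $d(x_i,x_j)<\e$ for all $i,j\geq n$. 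The whole theorem thus reduces to one quantitative fact — that some tail of the orbit is bounded.

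This boundedness is the step I expect to be the main obstacle, and it is where the generalized setting genuinely departs from the classical one. In a metric space one bounds $d(x_0,x_j)\leq d(x_0,x_1)+d(x_1,x_j)\leq d(x_0,x_1)+r\delta_0$ and solves for $\delta_0$; but for $\nu\geq2$ the triangle inequality is unavailable, and routing $d(x_0,x_j)$ through the $\nu$-gon inequality spends $\nu$ contractive edges, giving only $\delta_0\leq d(x_0,x_1)+\nu r\delta_0$, which closes solely when $\nu r<1$. To cover every $r\in[0,1)$ I would instead bypass full boundedness and show directly that $\{x_n\}$ is $\nu$-Cauchy, telescoping along consecutive iterates: the $\nu$-gon inequality applied to the $\nu+2$ distinct points $x_n,x_{n+1},\dots,x_{n+\nu},x_{n+1+m\nu}$ gives
\[
  d(x_n,x_{n+1+m\nu})\leq\sum_{i=0}^{\nu-1}d(x_{n+i},x_{n+i+1})+d(x_{n+\nu},x_{n+1+m\nu}),
\]
in which the final term is again of the form $d(x_{n'},x_{n'+1+(m-1)\nu})$ with $n'=n+\nu$; iterating and writing $a_k=d(x_k,x_{k+1})$ dominates $d(x_n,x_{n+1+m\nu})$, uniformly in $m$, by a tail of the series formed from the $a_k$ along arithmetic progressions of step $\nu$. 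The crux — and the genuinely delicate part of the proof — is to establish simultaneously that $a_k\to0$ fast enough to sum these progressions; for a quasi-contraction this couples $a_k$ to the gap-two distances $d(x_k,x_{k+2})$ and beyond, and is where the contraction, the $\nu$-gon inequality and, for even $\nu$, the auxiliary decay $d(x_n,x_{n+2})\to0$ must be combined. Proposition~\ref{prop:nu-Cauchy-is-Cauchy} then upgrades the resulting $\nu$-Cauchy property to Cauchy.

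Once $\{x_n\}$ is Cauchy, completeness provides a limit $z$ to which it converges in the strong sense. To see that $Tz=z$, I would estimate $d(Tz,x_{n+1})=d(Tz,Tx_n)\leq r\,m(z,x_n)$ and let $n\to\infty$: by Proposition~\ref{prop:d-is-continuous}, applied to $\{x_n\}$ and to the constant sequence $Tz$, the distances $d(z,x_n)$, $d(x_n,x_{n+1})$ and $d(z,x_{n+1})$ tend to $0$ while $d(x_n,Tz)\to d(z,Tz)$, so $m(z,x_n)\to d(z,Tz)$ and $d(Tz,x_{n+1})\to d(Tz,z)$; hence $d(z,Tz)\leq r\,d(z,Tz)$, forcing $Tz=z$. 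Uniqueness is immediate: if $z,w$ are distinct fixed points then $m(z,w)=\max\{d(z,w),0,0,d(z,w),d(z,w)\}=d(z,w)$, so $d(z,w)=d(Tz,Tw)\leq r\,d(z,w)$, a contradiction. Since an arbitrary $x\in X$ was taken as starting point and every Picard orbit converges strongly to a fixed point, uniqueness identifies each such limit as the single fixed point $z$.
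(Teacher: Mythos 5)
The paper itself states this theorem without proof, quoting it from \cite{Suzuki-Alamri-Khan}, so your attempt can only be measured against the known argument, not an in-paper one. Much of your skeleton is sound: the periodic case is handled correctly (a coincidence $x_i=x_j$ makes $T$ act as a cyclic permutation of the finite set $C$, every pair of points of $C$ is an image pair, and $\diam C\le r\,\diam C$ forces $C$ to be a singleton, hence a fixed point); the tail-diameter recursion $\delta_{n+1}\le r\delta_n$ is valid, since each of the five distances constituting $m(x_{i-1},x_{j-1})$ joins points of index at least $n$; and the end-game is fine (Proposition \ref{prop:d-is-continuous} applied to $\{x_n\}$ and the constant sequence $Tz$ gives $d(z,Tz)\le r\,d(z,Tz)$, and uniqueness is immediate).

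Nevertheless the proposal has a genuine, self-acknowledged gap at the theorem's only hard point. Both of your routes require a quantitative input that is never established: the $\delta_n$ route needs $\delta_N<\infty$ for some $N$, and the telescoping route needs $\sum_k d(x_k,x_{k+1})<\infty$ together with $d(x_k,x_{k+2})\to0$ (the latter needed for even $\nu$ to invoke Proposition \ref{prop:nu-Cauchy-is-Cauchy}). You call this ``the crux'' and list the ingredients that ``must be combined,'' but you never combine them, and the combination is not routine. Writing $a_k=d(x_k,x_{k+1})$ and $b_k=d(x_k,x_{k+2})$, the quasi-contraction gives $a_{k+1}\le r\max\{a_k,b_k\}$, but then $b_{k+1}\le r\max\{b_k,a_k,a_{k+1},a_{k+2},d(x_k,x_{k+3})\}$: the term $d(x,Ty)$ inside $m$ increases the index gap by one at each application, so the recursion couples $a_k$ to $d(x_k,x_{k+j})$ for every $j$ and does not close without a boundedness lemma --- precisely the lemma you set aside. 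In the classical metric-space proof of \'Ciri\'c's theorem, orbit boundedness (the diameter of $\{x,Tx,\dots,T^nx\}$ is at most $(1-r)^{-1}d(x,Tx)$, proved by observing that the diameter is attained at a pair containing $x$ and then applying the triangle inequality) is the heart of the argument, and, as your own computation shows, its naive transfer through the $\nu$-gon inequality closes only when roughly $r+r^2+\dotsb+r^{\nu}<1$, not for all $r\in[0,1)$. Handling arbitrary $r\in[0,1)$ is exactly the nontrivial content of the proof in \cite{Suzuki-Alamri-Khan}; by deferring it, your proposal is a correct reduction of the theorem to its central step, not a proof of the theorem.
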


The paper is organized as follows. In section \ref{sec:pre}, we study Cauchy sequences
in $\nu$-generalized metric spaces. We present a necessary and sufficient condition for
a sequence to be Cauchy. Next, in section \ref{sec:fixed-point-theorems}, we give new
fixed point theorems in $\nu$-generalized metric spaces. These results are generalizations
to $\nu$-generalized metric spaces of theorems of Meir and Keeler \cite{Meir-Keeler-1969},
\'Ciri\'c \cite{Ciric-1981} and Matkowski \cite[Theorem 1.5.1]{Kuczma}, and
Proinov \cite{Proinov-2006}.

\section{Results on Cauchy Sequences}
\label{sec:pre}

The following is the main result of the section.

\begin{lem}
\label{lem:technical-lemma}
  Let $\{x_n\}$ be a sequence in a $\nu$-generalized metric space $X$ such that
  $x_n\ (n\in\N)$ are all different. Suppose, for every $\e>0$, for any two subsequences
  $\{x_{p_i}\}$ and $\{x_{q_i}\}$, if\/ $\limsup\limits_{i\to\infty}d(x_{p_i},x_{q_i})\leq \e$,
  then, for some $N$,
  \begin{equation}\label{eqn:d(xpn+nun,xqn+nun)<=e}
    d(x_{p_i+1},x_{q_i+1}) \leq \e \quad (i\geq N).
  \end{equation}
  If $d(x_n,x_{n+1})\to0$, then $\{x_n\}$ is $\nu$-Cauchy.
\end{lem}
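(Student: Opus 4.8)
The plan is to argue by contradiction. For each $n$ write $D_n=\sup\set{d(x_n,x_{n+1+m\nu}):m\in\Z^+}$, so that $\{x_n\}$ is $\nu$-Cauchy precisely when $D_n\to0$. If this fails, there is some $\e>0$ with $D_n>\e$ for infinitely many $n$; fix such an $\e$. My goal is to manufacture from this two subsequences to which the hypothesis applies but whose conclusion is violated, and that violation is the contradiction I am after. Note that the index pairs $(n,n+1+m\nu)$ are preserved in gap by the shift $(p,q)\mapsto(p-1,q-1)$, and this is what will let me feed ``predecessors'' into the hypothesis.

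For the selection, first observe that since $d(x_n,x_{n+1})\to0$, only finitely many $n$ satisfy $d(x_n,x_{n+1})\ge\e$; hence there are infinitely many $n$ with both $d(x_n,x_{n+1})<\e$ and $D_n>\e$, which I call \emph{admissible}. For each admissible $n$, let $m_n=\min\set{m\in\Z^+:d(x_n,x_{n+1+m\nu})>\e}$ and set $q_n=n+1+m_n\nu$. Because $d(x_n,x_{n+1})<\e$ we have $m_n\ge1$, so minimality gives $d(x_n,x_{q_n-\nu})\le\e$ while $d(x_n,x_{q_n})>\e$. Listing the admissible indices in increasing order as $p_1<p_2<\dotsb$ and then thinning in $i$ (possible since $q_{p_i}>p_i\to\infty$, so $q_{p_i}\to\infty$), I may assume that both $\{p_i\}$ and $\{q_i\}$ are strictly increasing, where $q_i:=q_{p_i}$ and $m_i:=m_{p_i}$. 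Then $\{x_{p_i-1}\}$, $\{x_{q_i-1}\}$, $\{x_{p_i}\}$, $\{x_{q_i}\}$ are all genuine subsequences, and $(p_i-1,q_i-1)$ carries the same index gap $1+m_i\nu$ as $(p_i,q_i)$.

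The crux is to bound the predecessor distance $d(x_{p_i-1},x_{q_i-1})$ via the quadrilateral inequality along the chain $x_{p_i-1},\,x_{p_i},\,x_{q_i-\nu},\,x_{q_i-\nu+1},\dotsc,x_{q_i-1}$, whose $\nu$ interior vertices $x_{p_i},x_{q_i-\nu},\dotsc,x_{q_i-2}$ are pairwise distinct and distinct from the endpoints because $q_i-\nu=p_i+1+(m_i-1)\nu>p_i$ (here $m_i\ge1$ is used, together with the fact that the $x_n$ are all different). This yields
\[
 d(x_{p_i-1},x_{q_i-1})\le d(x_{p_i-1},x_{p_i})+d(x_{p_i},x_{q_i-\nu})+\sum_{j=1}^{\nu-1}d(x_{q_i-\nu+j-1},x_{q_i-\nu+j}).
\]
The middle term is $\le\e$ by the choice of $m_i$, while the remaining $\nu$ terms are consecutive distances whose indices tend to infinity, so their (finite) sum tends to $0$. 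Hence $\limsup_i d(x_{p_i-1},x_{q_i-1})\le\e$. Applying the hypothesis to $\{x_{p_i-1}\}$ and $\{x_{q_i-1}\}$ then forces $d(x_{p_i},x_{q_i})=d(x_{(p_i-1)+1},x_{(q_i-1)+1})\le\e$ for all large $i$, contradicting $d(x_{p_i},x_{q_i})>\e$. Therefore $D_n\to0$ and $\{x_n\}$ is $\nu$-Cauchy.

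The main obstacle is the geometric bookkeeping forced by the quadrilateral inequality: unlike the triangle inequality it demands exactly $\nu$ interior points and insists that all $\nu+2$ vertices be pairwise distinct, so the chain must be arranged so that the endpoint indices $p_i-1,p_i$ do not collide with the block $q_i-\nu,\dotsc,q_i-1$. This is exactly what the minimality choice of $q_n$ secures, by guaranteeing $m_i\ge1$ and hence a genuine gap. The second delicate point is purely sequential: shifting every index down by one must still leave honest subsequences, which is why I first thin $\{(p_i,q_i)\}$ so that both index sequences increase strictly.
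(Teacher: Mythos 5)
Your proof is correct and is essentially the paper's own argument: both negate $\nu$-Cauchyness, use $d(x_n,x_{n+1})\to0$ to select indices with a minimal $m_i$ so that $d(x_{n},x_{n+1+m_i\nu})>\e$ while $d(x_{n},x_{n+1+(m_i-1)\nu})\leq\e$, bound the predecessor distance via the $(\nu+2)$-point polygonal inequality (one edge $\leq\e$ plus $\nu$ consecutive edges that vanish), and then contradict the hypothesis applied to the index-shifted subsequences. If anything, your write-up is slightly more careful than the paper's, since you explicitly thin so that both index sequences are strictly increasing and verify that the $\nu+2$ points in the chain are pairwise distinct, points the paper passes over in silence.
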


\begin{proof}
  Suppose $\set{x_n}$ is not $\nu$-Cauchy. Then \eqref{eqn:k-Cauchy} fails to hold
  for $k=\nu$. Hence, there is $\e>0$ such that
  \begin{equation}\label{eqn:negation-of-Cauchy}
    \forall k\in\N,\ \exists\, n\geq k,
    \quad \sup\set{d(x_n,x_{n+1+m\nu}):m\in\Z^+}>\e.
  \end{equation}

  Since $d(x_n,x_{n+1})\to0$, there exist positive integers $k_1<k_2<\dotsb$ such that
  \[
    d(x_n,x_{n+1}) < \e/i \quad (n\geq k_i).
  \]

  \noindent
  For each $k_i$, by \eqref{eqn:negation-of-Cauchy}, there exist
  $n_i\geq k_i+1$ and $m_i\in\Z^+$ such that
  \[
    d(x_{n_i},x_{n_i+1+m_i\nu})>\e.
  \]
  Since $d(x_{n_i},x_{n_i+1})<\e$, we have $m_i\geq 1$. We let $m_i$ be the smallest number
  with this property so that $d(x_{n_i},x_{n_i+1+m_i\nu-\nu}) \leq \e$.
  Now, let $p_i=n_i-1$ and $q_i=n_i+m_i\nu$. Then $q_i > p_i \geq k_i$, and
  \[
    d(x_{p_i+1},x_{q_i+1})>\e,\quad
    d(x_{p_i+1},x_{q_i+1-\nu}) \leq \e.
  \]

  Using property \eqref{item:nu-angle-inequality} in Definition \ref{dfn:nu-generalized-ms},
  since all $x_n\ (n\in\N)$ are different, for every $i\in\N$, we have
  \begin{equation*}
  \begin{split}
    d(x_{p_i},x_{q_i})
    \leq d(x_{p_i},x_{p_i+1})  & + d(x_{p_i+1},x_{q_i+1-\nu}) \\
     & + d(x_{q_i+1-\nu},x_{q_i-\nu}) + \dotsb + d(x_{q_i-1},x_{q_i}).
  \end{split}
  \end{equation*}

  \noindent
  Therefore, $d(x_{p_i},x_{q_i}) \leq \nu\e/i + \e$, and thus
  $\limsup\limits_{i\to\infty} d(x_{p_i},x_{q_i}) \leq \e$. This is a contradiction,
  since $d(x_{p_i+1},x_{q_i+1})>\e$, for all $i$.
\end{proof}

\begin{thm}
  Suppose $\{x_n\}$ satisfies all conditions in Lemma $\ref{lem:technical-lemma}$,
  and, moreover, $d(x_n,x_{n+2})\to0$. Then $\{x_n\}$ is Cauchy.
\end{thm}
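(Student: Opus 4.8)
The plan is to obtain this theorem as a short corollary of the two results already in place, namely Lemma~\ref{lem:technical-lemma} and Proposition~\ref{prop:nu-Cauchy-is-Cauchy}. By hypothesis the sequence $\{x_n\}$ satisfies every condition of Lemma~\ref{lem:technical-lemma}: the points $x_n$ are pairwise distinct, $d(x_n,x_{n+1})\to0$, and the subsequence condition on $\limsup d(x_{p_i},x_{q_i})$ holds. Applying the lemma therefore yields, at once, that $\{x_n\}$ is $\nu$-Cauchy. This is the substantive half of the work, and it has already been carried out in the proof of the lemma.

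It then remains only to pass from $\nu$-Cauchy to Cauchy, and for this I would invoke Proposition~\ref{prop:nu-Cauchy-is-Cauchy} directly. Its hypotheses are exactly in place: the $x_n$ are all different (a standing assumption inherited from Lemma~\ref{lem:technical-lemma}), and $\{x_n\}$ is $\nu$-Cauchy by the previous step. The proposition branches on the parity of $\nu$. When $\nu$ is odd, being $\nu$-Cauchy already forces Cauchy with no further input; when $\nu$ is even, the additional requirement $d(x_n,x_{n+2})\to0$ is what is needed, and this is precisely the extra hypothesis the theorem supplies. Since $d(x_n,x_{n+2})\to0$ is assumed unconditionally here, Proposition~\ref{prop:nu-Cauchy-is-Cauchy} applies regardless of parity, and so I would not even separate the two cases: the conclusion that $\{x_n\}$ is Cauchy follows immediately.

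I do not expect any genuine obstacle, since the theorem is essentially a repackaging of the lemma and the proposition; the assumption $d(x_n,x_{n+2})\to0$ plays a role solely to cover the even case of Proposition~\ref{prop:nu-Cauchy-is-Cauchy}. The only point that warrants any care is verifying that the hypotheses of the two invoked results line up exactly with what is assumed, which they do. Were one to avoid the black-box use of the proposition and argue self-containedly, the only nontrivial case would again be even $\nu$, where one would use the quadrilateral inequality together with the $\nu$-Cauchy estimate and $d(x_n,x_{n+2})\to0$ to bridge the odd-length gaps; but with the proposition available this is unnecessary.
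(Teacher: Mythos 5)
Your proof is correct and matches the paper's own argument exactly: apply Lemma~\ref{lem:technical-lemma} to conclude $\{x_n\}$ is $\nu$-Cauchy, then invoke Proposition~\ref{prop:nu-Cauchy-is-Cauchy}, with the hypothesis $d(x_n,x_{n+2})\to0$ covering the even-$\nu$ case. Your additional remarks on the parity branching are accurate but not needed beyond this two-step reduction.
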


\begin{proof}
  By Lemma \ref{lem:technical-lemma}, the sequence $\{x_n\}$ is $\nu$-Cauchy.
  Since $d(x_n,x_{n+2})\to0$, by Proposition \ref{prop:nu-Cauchy-is-Cauchy},
  the sequence $\{x_n\}$ is Cauchy.
\end{proof}

\begin{thm}
\label{thm:main}
  Let $\{x_n\}$ be a sequence in $X$ such that
  $x_n\ (n\in\N)$ are all different and $d(x_n,x_{n+1})+d(x_n,x_{n+2})\to0$.
  Assume $\m(x,y)$ is a nonnegative function on $X\times X$ such that,
  for any two subsequences $\{x_{p_i}\}$ and $\{x_{q_i}\}$,
  \begin{equation}\label{eqn:limsup m <= limsup d}
      \limsup_{i\to\infty} \m(x_{p_i},x_{q_i})
      \leq \limsup_{i\to\infty} d(x_{p_i},x_{q_i}).
  \end{equation}
  The following condition then implies that $\{x_n\}$ is Cauchy:
  for every $\e>0$, for any two subsequences $\{x_{p_i}\}$ and $\{x_{q_i}\}$,
  if $\limsup \m(x_{p_i},x_{q_i})\leq \e$, then, for some $N$,
    \begin{equation*}
      d(x_{p_i+1},x_{q_i+1}) \leq \e \quad (i\geq N).
    \end{equation*}
\end{thm}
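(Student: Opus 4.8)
The plan is to reduce the statement to the two earlier results of this section, namely Lemma \ref{lem:technical-lemma} and Proposition \ref{prop:nu-Cauchy-is-Cauchy}, by using the comparison inequality \eqref{eqn:limsup m <= limsup d} to convert the contraction hypothesis phrased in terms of $\m$ into the corresponding hypothesis phrased in terms of $d$. First I would note that since $d$ is nonnegative, the assumption $d(x_n,x_{n+1})+d(x_n,x_{n+2})\to0$ splits into the two separate facts $d(x_n,x_{n+1})\to0$ and $d(x_n,x_{n+2})\to0$. The first of these is exactly the decay hypothesis required by Lemma \ref{lem:technical-lemma}, and the second is precisely what is needed to pass from $\nu$-Cauchy to Cauchy via Proposition \ref{prop:nu-Cauchy-is-Cauchy} (in the even case; in the odd case it is not even needed).

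The central step is to verify the hypothesis of Lemma \ref{lem:technical-lemma}. So I would fix $\e>0$ and take arbitrary subsequences $\{x_{p_i}\}$ and $\{x_{q_i}\}$ with $\limsup_{i\to\infty} d(x_{p_i},x_{q_i})\leq\e$. Applying the comparison inequality \eqref{eqn:limsup m <= limsup d} to these same subsequences gives
\[
  \limsup_{i\to\infty}\m(x_{p_i},x_{q_i})
  \leq \limsup_{i\to\infty} d(x_{p_i},x_{q_i})\leq \e .
\]
Now the contraction hypothesis of the present theorem, which is stated for any pair of subsequences whose $\m$-$\limsup$ does not exceed $\e$, yields some $N$ with $d(x_{p_i+1},x_{q_i+1})\leq\e$ for all $i\geq N$. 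This is exactly the conclusion \eqref{eqn:d(xpn+nun,xqn+nun)<=e} demanded by Lemma \ref{lem:technical-lemma}, so that lemma applies and $\{x_n\}$ is $\nu$-Cauchy.

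Finally, since $d(x_n,x_{n+2})\to0$, Proposition \ref{prop:nu-Cauchy-is-Cauchy} upgrades the $\nu$-Cauchy property to the Cauchy property, completing the proof. I do not expect a genuine obstacle here; the whole argument is a bookkeeping reduction. The one point that requires care is the \emph{direction} of the inequality in \eqref{eqn:limsup m <= limsup d}: the reduction works precisely because a bound on the $d$-$\limsup$ forces the \emph{smaller} quantity, the $\m$-$\limsup$, to obey the same bound, so that the $\m$-contraction hypothesis becomes available. If the inequality ran the other way the argument would break, so I would be careful to apply \eqref{eqn:limsup m <= limsup d} to the pair of subsequences already under consideration and not to some rearrangement of them.
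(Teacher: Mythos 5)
Your proof is correct and takes essentially the same route as the paper: the paper disposes of this theorem in one line by combining Lemma \ref{lem:technical-lemma} with the preceding (unlabeled) theorem, which is exactly your reduction --- use \eqref{eqn:limsup m <= limsup d} to convert the $\m$-hypothesis into the $d$-hypothesis of Lemma \ref{lem:technical-lemma}, conclude $\nu$-Cauchy, then upgrade to Cauchy via $d(x_n,x_{n+2})\to0$ and Proposition \ref{prop:nu-Cauchy-is-Cauchy}. (Incidentally, the paper's one-line proof contains a circular citation of Theorem \ref{thm:main} itself where the preceding theorem is meant; your write-up supplies the argument that reference was pointing to.)
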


\begin{proof}
  Follows directly from Lemma \ref{lem:technical-lemma} and Theorem \ref{thm:main}.
\end{proof}

\section{Fixed Point Theorems of \'Ciri\'c-Matkowski Type}
\label{sec:fixed-point-theorems}

Let $(X,d\,)$ be a $\nu$-generalized metric space.
A mapping $T:X\to X$ is said to be a \emph{\CM{} contraction} if
$d(Tx,Ty)<d(x,y)$, for every $x,y\in X$, with $x\neq y$, and,
for any $\e>0$, there exists $\de>0$ such that
  \begin{equation}\label{eqn:CM-contraction}
    \forall x,y\in X, \quad
     d(x, y) < \de+\e \Longrightarrow d(Tx,Ty)\leq \e.
  \end{equation}

\begin{lem}[{\cite[Lemma 3.1]{abtahi-FPT}}]
\label{lem:equiv-conditions-m-contractive-sequence}
 For a sequence $\set{x_n}$ in $X$ and a nonnegative function $\m(x,y)$ on $X\times X$,
 the following are equivalent:
  \begin{enumerate}[\upshape(i)]
    \item \label{item:it-is-m-contractive-sequence}
    for every $\e>0$, there exist $\de>0$ and $N\in\Z^+$ such that
    \begin{equation}\label{eqn:m-contractive-sequence}
      \forall p,q\geq N, \quad
      \m(x_p,x_q) < \e+\de \Longrightarrow d(x_{p+1},x_{q+1}) \leq \e.
    \end{equation}

    \item \label{item:m-contractive-sequence-in-term-of-pnqn}
    for every $\e>0$, for any two subsequences $\{x_{p_i}\}$ and $\{x_{q_i}\}$,
    if \newline $\limsup \m(x_{p_i},x_{q_i})\leq \e$ then,
    for some $N$, $d(x_{p_i+1},x_{q_i+1}) \leq \e\ (i\geq N)$.
  \end{enumerate}
\end{lem}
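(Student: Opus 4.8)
The plan is to prove the two implications separately; the direction \eqref{item:it-is-m-contractive-sequence}\,$\Rightarrow$\,\eqref{item:m-contractive-sequence-in-term-of-pnqn} is immediate, while the content lies in the converse.

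For \eqref{item:it-is-m-contractive-sequence}\,$\Rightarrow$\,\eqref{item:m-contractive-sequence-in-term-of-pnqn}, I would argue directly from the definitions. Fix $\e>0$ and let $\de>0$ and $N$ be the constants provided by \eqref{eqn:m-contractive-sequence}. Given subsequences $\{x_{p_i}\}$ and $\{x_{q_i}\}$ with $\limsup_{i\to\infty}\m(x_{p_i},x_{q_i})\leq\e$, the definition of $\limsup$ yields some $N_1$ with $\m(x_{p_i},x_{q_i})<\e+\de$ for $i\geq N_1$; since $p_i,q_i\to\infty$, there is also $N_2$ with $p_i,q_i\geq N$ for $i\geq N_2$. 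For $i\geq\max\{N_1,N_2\}$ both hypotheses of \eqref{eqn:m-contractive-sequence} are met, so $d(x_{p_i+1},x_{q_i+1})\leq\e$, which is exactly condition \eqref{item:m-contractive-sequence-in-term-of-pnqn}.

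For the converse I would use the contrapositive of \eqref{item:it-is-m-contractive-sequence}. Suppose \eqref{item:it-is-m-contractive-sequence} fails: there is $\e>0$ such that for every $\de>0$ and every $N$ one can find indices $p,q\geq N$ with $\m(x_p,x_q)<\e+\de$ and $d(x_{p+1},x_{q+1})>\e$. Choosing $\de=1/i$ and $N=i$ produces, for each $i$, indices $p_i,q_i\geq i$ with
\[
  \m(x_{p_i},x_{q_i})<\e+1/i,\qquad d(x_{p_i+1},x_{q_i+1})>\e.
\]
The one point that needs care is that $\{x_{p_i}\}$ and $\{x_{q_i}\}$ need not yet be genuine subsequences, since the indices $p_i$ and $q_i$ are not guaranteed to be strictly increasing. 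However, $p_i\geq i$ and $q_i\geq i$ force $p_i,q_i\to\infty$, so a routine inductive extraction selects $i_1<i_2<\dotsb$ along which both $p_{i_j}$ and $q_{i_j}$ are strictly increasing; after relabeling we obtain honest subsequences still satisfying the two displayed inequalities.

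Finally, from $\m(x_{p_i},x_{q_i})<\e+1/i\to\e$ we get $\limsup_{i\to\infty}\m(x_{p_i},x_{q_i})\leq\e$, so condition \eqref{item:m-contractive-sequence-in-term-of-pnqn} would force $d(x_{p_i+1},x_{q_i+1})\leq\e$ for all large $i$, contradicting $d(x_{p_i+1},x_{q_i+1})>\e$. Hence \eqref{item:it-is-m-contractive-sequence} must hold. The only delicate step is the extraction producing strictly increasing index sequences; the rest is a direct unwinding of the definitions of $\limsup$ and of the two conditions.
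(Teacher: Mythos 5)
Your proof is correct: the forward direction is the straightforward unwinding you describe, and in the converse you correctly identified and repaired the one real subtlety, namely that the indices $p_i,q_i\geq i$ obtained by negating (i) with $\de=1/i$, $N=i$ need not be strictly increasing, which your inductive extraction (e.g.\ choosing $i_{j+1}>\max\{p_{i_j},q_{i_j}\}$, possible since $p_i,q_i\geq i\to\infty$) fixes while preserving both displayed inequalities. Note that this paper does not prove the lemma at all --- it is imported by citation from \cite{abtahi-FPT} --- so there is no in-paper argument to compare against; your proof is the natural self-contained one and can stand in for the citation.
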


Now, suppose $T$ is a \CM{} contraction on $X$, take a point $x\in X$, and
set $x_n=T^nx$ ($n\in\N$). Then, for every $\e>0$, there exist $\de>0$ such that
$d(x_p,x_q) < \e+\de$ implies $d(x_{p+1},x_{q+1}) \leq \e$. By the above lemma,

\begin{lem}
\label{lem:the-same-or-different}
  Let $T:X\to X$ be a mapping. Suppose $d(T^nx,T^{n+1}x)\to0$, for some $x\in X$.
  Then, for some $k\in\N$, either the picard iterates $T^n x$ $(n\geq k)$ are all
  different or they are all the same.
\end{lem}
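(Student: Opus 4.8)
The plan is to follow the orbit $x_n = T^n x$ and exploit the only real content of the hypothesis, namely $d(x_n,x_{n+1})\to 0$. I would split into two cases according to whether the points of the orbit are pairwise distinct. If the points $x_n$ $(n\in\N)$ are all different, there is nothing to do: take $k=1$, and the iterates $T^n x$ $(n\geq k)$ are all different, which is the first alternative.

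The work is in the opposite case, where two of them coincide, say $x_n=x_m$ with $n<m$; put $p=m-n\geq 1$. Applying $T$ to both sides of $x_n=x_m$ and iterating, I would show by induction that $x_i=x_{i+p}$ for every $i\geq n$; that is, the tail $(x_i)_{i\geq n}$ is $p$-periodic. Consequently the sequence of consecutive distances $\bigl(d(x_i,x_{i+1})\bigr)_{i\geq n}$ is $p$-periodic as well, so it takes only the finitely many values $d(x_n,x_{n+1}),\dots,d(x_{n+p-1},x_{n+p})$.

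The decisive step is then to invoke $d(x_n,x_{n+1})\to 0$: a $p$-periodic sequence of nonnegative reals that converges to $0$ must be identically $0$, since each residue class is constant and contains a subsequence tending to $0$. Hence $d(x_i,x_{i+1})=0$, i.e.\ $x_i=x_{i+1}$, for all $i\geq n$, so that $x_n=x_{n+1}=x_{n+2}=\cdots$ and the iterates $T^i x$ $(i\geq n)$ are all the same; taking $k=n$ gives the second alternative.

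I expect the only genuinely substantive point to be the observation that a single coincidence in the orbit propagates under $T$ to make the whole tail periodic, combined with the exclusion of a nontrivial cycle (minimal period $\geq 2$): it is precisely here that $d(x_n,x_{n+1})\to 0$ is indispensable, because along a cycle of length $\geq 2$ the consecutive distances would form a fixed list of positive numbers and could not tend to $0$. I would also note that, apart from symmetry and the axiom $d(a,b)=0\iff a=b$, the $\nu$-generalized structure is not used in this argument.
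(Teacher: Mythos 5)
Your proof is correct and takes essentially the same route as the paper: a coincidence $T^nx=T^mx$ forces the tail of the orbit to be periodic, and $d(T^nx,T^{n+1}x)\to0$ excludes a cycle of length $\geq 2$, leaving a constant tail. If anything, your residue-class argument (a $p$-periodic sequence of nonnegative reals tending to $0$ is identically $0$) makes explicit the concluding contradiction that the paper's printed proof leaves implicit after displaying the periodic sequence, and it avoids needing the minimality of the period.
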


\begin{proof}
  Suppose $T^{k+m}x=T^kx$, for some $k,m\in\N$, and let $m$ be the smallest positive integer
  with this property. If $m=1$, that is $T^{k+1}x=T^kx$, then $T^nx=T^kx$, for $n\geq k$, and there is nothing
  to prove. If $m\geq2$, then every two successive element in the following sequence
  are different:
  \[
   T^kx,T^{k+1}x,\dotsc,T^{k+m-1}x,T^{k+m}x,T^{k+m+1}x,\dotsc
  \]
\end{proof}

\begin{thm}
\label{thm:m-contractive-T-produces-Cauchy}
  Let $T$ be a self-map of $X$ and $\m(x,y)$ be a nonnegative function on $X\times X$.
  Suppose, for some point $x\in X$, the following conditions hold:
  \begin{enumerate}[\upshape(i)]
    \item for any $\e>0$, there exist $\de>0$ and $N\in\Z^+$ such that
    \begin{equation}\label{eqn:m-contractive-orbits}
      \forall p,q\geq N, \quad
       \m(T^px, T^qx) < \de+\e \Longrightarrow d(T^{p+1} x, T^{q+1}x)\leq \e,
    \end{equation}

    \item condition \eqref{eqn:limsup m <= limsup d} holds for any two subsequences
    $\{T^{p_i}x\}$ and $\{T^{q_i}x\}$ of\/ $\{T^nx\}$,

    \item $d(T^nx,T^{n+1}x)+d(T^nx,T^{n+2}x)\to0$.
  \end{enumerate}
  Then $\set{T^nx}$ is a Cauchy sequence.
\end{thm}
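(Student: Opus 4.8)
The plan is to reduce the statement to Theorem~\ref{thm:main} by means of the dichotomy provided by Lemma~\ref{lem:the-same-or-different}. Writing $x_n=T^nx$, I first observe that condition~(iii) forces $d(x_n,x_{n+1})\to0$, since both summands are nonnegative. By Lemma~\ref{lem:the-same-or-different} there is then $k\in\N$ such that the iterates $x_n$ $(n\geq k)$ are either all equal or pairwise distinct.

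If they are eventually all equal, the sequence is eventually constant, so the supremum in \eqref{eqn:k-Cauchy} vanishes for large $n$ and $\{x_n\}$ is trivially Cauchy; this case is immediate.

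The substantive case is when the tail $\{x_n\}_{n\geq k}$ consists of pairwise distinct points. Here I would note that being Cauchy depends only on the tail of a sequence---the limit in \eqref{eqn:k-Cauchy} is taken as $n\to\infty$---so it suffices to show this tail is Cauchy, and after reindexing I may assume that all $x_n$ $(n\in\N)$ are different. It then remains to verify the hypotheses of Theorem~\ref{thm:main}. Condition~(iii) supplies $d(x_n,x_{n+1})+d(x_n,x_{n+2})\to0$, and condition~(ii) supplies \eqref{eqn:limsup m <= limsup d} for any two subsequences. For the remaining subsequence-form contraction hypothesis demanded by Theorem~\ref{thm:main}, I would invoke Lemma~\ref{lem:equiv-conditions-m-contractive-sequence}: condition~(i) of the present theorem is precisely condition~(i) of that lemma for the sequence $\{x_n\}$, and its equivalent condition~(ii) is exactly the contraction hypothesis that Theorem~\ref{thm:main} requires. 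An application of Theorem~\ref{thm:main} then yields that $\{x_n\}$ is Cauchy.

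The argument is largely a matter of assembling the earlier results, so no deep obstacle is expected; the only point needing care is the reduction to the pairwise-distinct tail. Since Theorem~\ref{thm:main} is phrased for sequences all of whose terms are different, and Lemma~\ref{lem:equiv-conditions-m-contractive-sequence} for sequences indexed over $\N$, I must check that discarding an initial segment and reindexing preserves all three hypotheses and does not alter whether the sequence is Cauchy---both of which hold because each hypothesis is itself a tail condition.
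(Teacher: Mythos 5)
Your proposal is correct and follows essentially the same route as the paper's proof: convert condition (i) to subsequence form via Lemma~\ref{lem:equiv-conditions-m-contractive-sequence}, split into the eventually-constant and pairwise-distinct cases via Lemma~\ref{lem:the-same-or-different}, and apply Theorem~\ref{thm:main} in the latter case. If anything, you are more careful than the paper, which silently glosses over the reindexing needed because Theorem~\ref{thm:main} requires \emph{all} terms distinct while the lemma only guarantees distinctness from some index $k$ onward.
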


\begin{proof}
  Using Lemma \ref{lem:equiv-conditions-m-contractive-sequence},
  condition \eqref{eqn:m-contractive-orbits} implies that, for every $\e>0$,
  for any two subsequences $\{T^{p_i}x\}$ and $\{T^{q_i}x\}$ of $\{T^nx\}$,
  if $\limsup \m(T^{p_i}x,T^{q_i}x)\leq \e$ then, for some $N$,
  $d(T^{p_i+1}x,T^{q_i+1}x) \leq \e$ $(i\geq N)$. By Lemma \ref{lem:the-same-or-different},
  the Picard iterates $T^nx$ are eventually all the same, in which case $\{T^nx\}$
  is obviously a Cauchy sequence, or they are all different. In the latter case,
  Theorem \ref{thm:main} shows that $\{T^nx\}$ is Cauchy.
\end{proof}

\begin{cor}
  Let $T$ be a \CM{} contraction on $X$. Then $T$ has a unique fixed point $z$, and,
  moreover, for any $x\in X$, the sequence $\{T^nx\}$ converges to $z$
  in the strong sense.
\end{cor}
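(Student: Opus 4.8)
The plan is to fix an arbitrary $x\in X$, write $x_n=T^nx$, and first show that $\set{x_n}$ is Cauchy by invoking Theorem \ref{thm:m-contractive-T-produces-Cauchy} with the choice $\m(u,v)=d(u,v)$. With this choice, condition (ii) of that theorem (inequality \eqref{eqn:limsup m <= limsup d}) holds trivially, and condition (i) (inequality \eqref{eqn:m-contractive-orbits}) is precisely the defining Meir--Keeler-type implication \eqref{eqn:CM-contraction} of a \CM{} contraction, read off along the orbit with $u=T^px$ and $v=T^qx$. So the only substantive thing to verify is condition (iii), namely $d(x_n,x_{n+1})+d(x_n,x_{n+2})\to0$.

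For the first summand I would set $a_n=d(x_n,x_{n+1})$. If $a_k=0$ for some $k$, then $x_k$ is already a fixed point and the orbit is eventually constant, so assume $a_n>0$ for all $n$. Strict contractivity, $d(Tu,Tv)<d(u,v)$ for $u\neq v$, then forces $a_{n+1}<a_n$, so $a_n$ decreases to some $a\geq0$; if $a>0$, applying \eqref{eqn:CM-contraction} with $\e=a$ yields $\de>0$ with $a<a_n<a+\de$ for large $n$, whence $a_{n+1}\leq a$, contradicting $a_{n+1}>a$. Thus $a_n\to0$. Knowing this, Lemma \ref{lem:the-same-or-different} splits the argument: either the iterates are eventually all equal (a fixed point is reached and everything is trivial) or eventually all different. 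In the latter case $x_n\neq x_{n+2}$ for large $n$, so the identical monotonicity-plus-\eqref{eqn:CM-contraction} argument applied to $b_n=d(x_n,x_{n+2})$ gives $b_n\to0$. This settles condition (iii), so $\set{x_n}$ is Cauchy, and completeness of $X$ then provides a point $z$ to which $\set{x_n}$ converges in the strong sense.

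It remains to identify $z$ as a fixed point and to prove uniqueness. The key observation is that $d(x_{n+1},Tz)=d(Tx_n,Tz)\leq d(x_n,z)$ for every $n$ (with strict inequality whenever $x_n\neq z$, and trivial equality otherwise), so $d(x_{n+1},Tz)\to0$; hence $\set{x_n}$ converges to $Tz$ as well, and being Cauchy it does so in the strong sense. Applying Proposition \ref{prop:d-is-continuous} to the single sequence $\set{x_n}$, regarded once as converging to $z$ and once as converging to $Tz$, gives
\[
  d(z,Tz)=\lim_{n\to\infty}d(x_n,x_n)=0,
\]
so $Tz=z$. Uniqueness is immediate: if $Tz=z$ and $Tw=w$ with $z\neq w$, then $d(z,w)=d(Tz,Tw)<d(z,w)$, a contradiction; consequently every orbit converges to this one common fixed point.

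I expect the main obstacle to be the second half of condition (iii), the convergence $d(x_n,x_{n+2})\to0$. Because a $\nu$-generalized metric has no ordinary triangle inequality, one cannot bound $d(x_n,x_{n+2})$ by $d(x_n,x_{n+1})+d(x_{n+1},x_{n+2})$; the monotonicity argument via strict contractivity is therefore essential, and it in turn requires the distinctness of the iterates, which is exactly what Lemma \ref{lem:the-same-or-different} supplies once $d(x_n,x_{n+1})\to0$ has been established.
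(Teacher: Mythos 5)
Your proposal is correct and takes essentially the same route as the paper: strict contractivity plus the Meir--Keeler-type condition \eqref{eqn:CM-contraction} to show $d(x_n,x_{n+1})+d(x_n,x_{n+2})\to0$, Theorem \ref{thm:m-contractive-T-produces-Cauchy} with $\m=d$ to get Cauchyness, and Proposition \ref{prop:d-is-continuous} combined with $d(Tx_n,Tz)\leq d(x_n,z)$ to identify the limit as the unique fixed point. The only cosmetic difference is that you dispose of eventually periodic orbits by citing Lemma \ref{lem:the-same-or-different} after proving $d(x_n,x_{n+1})\to0$, whereas the paper rules them out directly via the chain of strict inequalities $d(T^kx,T^{k+1}x)=d(T^{k+m}x,T^{k+m+1}x)<\dotsb<d(T^kx,T^{k+1}x)$ --- the same underlying argument.
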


\begin{proof}
  First, we show that $T$ has at most one fixed point. Suppose $Tz=z$ and
  $y\neq z$. Then $d(Ty,Tz)=d(Ty,z)<d(y,z)$. Hence $Ty\neq y$.

  Given $x\in X$, we consider the following two cases.
  \begin{enumerate}[\quad(a)]
    \item There exists $k,m\in\N$ such that $T^{k+m}x=T^kx$.
    \item $T^nx$ $(n\in\N)$ are all different.
  \end{enumerate}

  In case (a), where $T^{k+m}x=T^kx$, for some $k,m\in\N$, we let $m$
  be the smallest positive integer with this property. If $m=1$, that is
  $T^{k+1}x=T^kx$, then $T^nx=T^kx$, for $n\geq k$, and there is nothing
  to prove. If $m\geq2$, then every two successive element in the following sequence
  are different:
  \[
   T^kx,T^{k+1}x,\dotsc,T^{k+m-1}x,T^{k+m}x,T^{k+m+1}x,\dotsc
  \]

  \noindent Recall that $x\neq y$ implies $d(Tx,Ty)<d(x,y)$. Hence
  \begin{align*}
    d(T^kx,T^{k+1}x)
     & = d(T^{k+m}x,T^{k+m+1}x) < d(T^{k+m-1}x,T^{k+m}x) \\
     & < \dotsb < d(T^{k+1}x,T^{k+2}x) < d(T^kx,T^{k+1}x).
  \end{align*}

  \noindent This is absurd.

  In case (b), we let $x_n=T^nx$, and show that $d(x_n,x_{n+i})\to0$, for $i=1,2$.
  Since $x_n$ $(n\in\N)$ are all different, we have $d(x_{n+1},x_{n+i+1})<d(x_{n},x_{n+i})$,
  for every $n$, that is, the sequence $\e_n=d(x_n,x_{n+i})$ is decreasing and thus
  $\e_n \downarrow\e$ for some $\e\geq0$. If $\e>0$, there is $\de>0$ such that
  $\e_n = d(T^nx,T^{n+1}x)\leq \e+\de$ implies that
  $\e_{n+1} = d(T^{n+1}x,T^{n+2}x)\leq \e$. This is a contradiction since
  we have $\e<\e_n$, for all $n$. Hence, $d(x_n,x_{n+i})\to0$ $(i=1,2)$. Now,
  by Theorem \ref{thm:m-contractive-T-produces-Cauchy}, the sequence $\{T^nx\}$
  is Cauchy. Since $X$ is complete, $\{T^nx\}$ converges to some $z\in X$.
  By Proposition \ref{prop:d-is-continuous}, we have
  \[
    d(z,Tz) = \lim_{n\to\infty} d(T^nx,Tz) \leq
    \lim_{n\to\infty} d(T^{n-1}x,z) = 0.
  \]
  Hence $Tz=z$, i.e., $z$ is a fixed point of $T$.
\end{proof}

\begin{lem}
  Let $\{x_n\}$ be a sequence in a $\nu$-generalized metric space $X$
  such that $x_n$ $(n\in\N)$ are all different.
  If $d(x_n,x_{n+1})+d(x_{n+1},x_{n+2})\to0$, then
  \[
    d(x_n,x_{n+m})\to0, \quad (m\geq 3).
  \]
\end{lem}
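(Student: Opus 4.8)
The plan is to call a positive integer $\ell$ \emph{admissible} when $d(x_n,x_{n+\ell})\to0$ as $n\to\infty$, and to prove that \emph{every} $\ell\ge1$ is admissible; the asserted conclusion is then the special case $\ell=m\ge3$. Reading the hypothesis as in Theorem~\ref{thm:main}, it gives both $d(x_n,x_{n+1})\to0$ and $d(x_n,x_{n+2})\to0$, so $\ell=1$ and $\ell=2$ are admissible at the outset. (Some control on two-step distances is genuinely needed: since property~\eqref{item:nu-angle-inequality} always involves exactly $\nu$ intermediate points, a length-$(\nu+1)$ self-avoiding index-walk with unit gaps only is monotone and lands at offset $\nu+1$, so one-step distances alone can dominate no other $m$.)

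The engine is a chaining principle that is just property~\eqref{item:nu-angle-inequality} applied along the orbit. Suppose $0=a_0,a_1,\dots,a_{\nu+1}$ are \emph{distinct} integers with $a_{\nu+1}=m>0$ such that each gap $|a_i-a_{i-1}|$ is admissible. Since all $x_n$ are different, the points $x_{n+a_0},\dots,x_{n+a_{\nu+1}}$ are $\nu+2$ distinct elements of $X$, so
\[
  d(x_n,x_{n+m})\le\sum_{i=1}^{\nu+1}d(x_{n+a_{i-1}},x_{n+a_i}).
\]
Each summand equals $d(x_j,x_{j+\ell})$ with $\ell=|a_i-a_{i-1}|$ admissible and $j=n+\min(a_{i-1},a_i)$; because the offsets $a_i$ are bounded, $j\to\infty$ and every summand tends to $0$, whence $m$ is admissible. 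Thus admissibility is stable under assembling a self-avoiding walk of exactly $\nu+1$ legs out of admissible gaps.

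For $m\ge\nu+1$ this gives the result by strong induction on $m$: take $a_i=i$ for $0\le i\le\nu$ and $a_{\nu+1}=m$. These $\nu+2$ integers are distinct because $m>\nu$, the first $\nu$ gaps equal $1$, and the last gap equals $m-\nu$, which satisfies $1\le m-\nu\le m-1$ and so is admissible by the induction hypothesis (with $\ell=1,2$ as base cases).

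The genuine difficulty is the finite range $3\le m\le\nu$ (empty unless $\nu\ge3$), where $m$ is smaller than the forced number of legs $\nu+1$, so no monotone forward chain can reach $x_{n+m}$: one must overshoot and return while keeping all $\nu+2$ indices distinct. Here I would exhibit an explicit self-avoiding walk whose gaps all lie in $\{1,2\}$: travel outward through one residue class modulo $2$, say $0,2,4,\dots,2k$, switch parity with one unit step, and return through the other class down to $m$. Raising the peak $k$ changes the leg-count by $2$, while using one or two unit steps at the turn fixes its residual parity, so the leg-count can be set to exactly $\nu+1$; the two parities of $\nu$ (equivalently of $\nu+1-m$) are the routine cases to record. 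I expect this overshoot construction to be the only delicate point, the remainder being the two bookkeeping checks that the chosen indices are distinct and that each gap is admissible.
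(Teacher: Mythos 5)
The paper states this lemma with no proof whatsoever, so there is nothing in-paper to compare against; your argument must stand on its own. Its skeleton does stand, and it is the natural one: take $\ell\in\{1,2\}$ as admissible at the outset (your charitable reading of the hypothesis is right and worth making explicit — as printed, $d(x_n,x_{n+1})+d(x_{n+1},x_{n+2})\to0$ literally says no more than $d(x_n,x_{n+1})\to0$, and it must be intended as $d(x_n,x_{n+2})\to0$, as in Theorem \ref{thm:main} and in the definition of asymptotic regularity; without two-step control a parity obstruction genuinely blocks even $m$ when $\nu$ is even), chain along $\nu+2$ distinct orbit points — correctly respecting that property \eqref{item:nu-angle-inequality} of Definition \ref{dfn:nu-generalized-ms} demands \emph{exactly} $\nu$ interior points, all distinct — use offsets $0,1,\dots,\nu,m$ with strong induction for $m\ge\nu+1$, and overshoot for $3\le m\le\nu$. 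All of that is sound.

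The genuine flaw sits precisely in the step you deferred as ``routine cases to record'': the parity mechanism for the overshoot walks is wrong. Writing the displacement as a signed sum of the gaps shows that the number of unit legs in any walk with gaps in $\{1,2\}$ from $0$ to $m$ is congruent to $m$ modulo $2$. So ``using one or two unit steps at the turn'' is not a free knob for the leg-count's parity: for odd $m$ a two-unit-step walk simply does not exist, and for even $m$ a one-unit-step walk does not exist — moreover for even $m$ your outward run cannot start $0,2,4,\dots$ at all, since the return class must contain $m$; one unit leg is forced to be the initial step $0\to1$, with the outward run through the odd class. With the unit-step count thus pinned by the parity of $m$, varying the peak $k$ moves the leg count in steps of $2$ only, and a single turn geometry realizes just one parity of $L$, missing half the pairs $(\nu,m)$: for $\nu=4$, $m=3$, the up-turn family $0,2,\dots,2k,2k+1,2k-1,\dots,3$ has $L=2k$ legs, never the required $\nu+1=5$. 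The repair is small but is a different mechanism: toggle the \emph{direction} of the turning unit step — descend from $2k+1$ after the step $2k\to2k+1$ (giving $L=2k+1-(m-1)/2$ for odd $m$), or turn downward via $2k\to2k-1$ and descend from there (giving $L=2k-(m-1)/2$, e.g.\ $0,2,4,6,5,3$ with $5$ legs) — the two arithmetic progressions together cover every $L\ge(m+1)/2$, hence $L=\nu+1$ since $\nu\ge m$; the analogous pair for even $m$ (turn $2k+1\to2k+2$ versus $2k+1\to2k$, after the start $0,1,3,\dots$) covers every $L\ge m/2+1$. Equivalently one may split a $2$-leg into two unit legs at a collision-free spot, such as replacing $0\to2$ by $0\to1\to2$, safe because the return run stays above $m\ge3$. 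With this local correction your argument is complete and in fact supplies the proof the paper omits.
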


\begin{dfn}
  A self-mapping $T$ of a $\nu$-generalized metric space $X$ is said to be
  \emph{sequentially continuous} if $\{Tx_n\}$ converges to $Tx$ whenever
  $\{x_n\}$ converges to $x$. The mapping $T$ is called
  \emph{asymptotically regular} if
  \[
    d(T^nx,T^{n+1}x)+d(T^nx,T^{n+2}x)\to0\quad (x\in X).
  \]
\end{dfn}

We are now in a position to state and prove a version of
Proinov's theorem, \cite[Theorem 4.2]{Proinov-2006}, for $\nu$-generalized metric spaces.

\begin{thm}
\label{thm:Generalized-Proinov}
  Let $X$ be a complete $\nu$-generalized metric space, and $T$ be a
  sequentially continuous and asymptotically regular self-map of $X$.
  For $\ga>0$, define $\m$ on $X\times X$ by
  \begin{equation}\label{eqn:m(x,y)-in-generalized-Proinov}
    m(x,y)=d(x,y)+\ga \bigl(d(x,Tx)+d(y,Ty)\bigr).
  \end{equation}
  Suppose $d(Tx,Ty)<\m(x,y)$, for every $x,y\in X$, with $x\neq y$, and,
  for any $\e>0$, there exist $\de>0$ and $N\in\N_0$ such that
  \begin{equation}\label{eqn:m-contractive-in-generalized-Proinov}
    \forall x,y\in X, \quad
     \m(T^Nx, T^Ny) < \de+\e \Longrightarrow d(T^{N+1}x,T^{N+1}y)\leq \e.
  \end{equation}
  Then $T$ has a unique fixed point $z$, and, for any $x\in X$, the Picard iterates
  $T^nx$ $(n\in\N)$ converge to $z$ in the strong sense.
\end{thm}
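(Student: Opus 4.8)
The plan is to reduce the statement to Theorem~\ref{thm:m-contractive-T-produces-Cauchy} and then to identify the limit of the Picard iterates with a fixed point. Fix an arbitrary $x\in X$ and write $x_n=T^nx$. I would first verify, for this $x$ and the function $m$ of \eqref{eqn:m(x,y)-in-generalized-Proinov}, the three hypotheses of that theorem. Condition (iii) is immediate, being exactly the asymptotic regularity of $T$. For condition (ii), the inequality \eqref{eqn:limsup m <= limsup d} along any pair of subsequences $\{T^{p_i}x\}$, $\{T^{q_i}x\}$ follows from the explicit form of $m$: since
\[
  m(T^{p_i}x,T^{q_i}x)=d(T^{p_i}x,T^{q_i}x)+\ga\bigl(d(T^{p_i}x,T^{p_i+1}x)+d(T^{q_i}x,T^{q_i+1}x)\bigr),
\]
and since $p_i,q_i\to\infty$ forces the two bracketed terms to vanish by asymptotic regularity, the $\limsup$ of $m$ and of $d$ coincide, so \eqref{eqn:limsup m <= limsup d} holds (indeed with equality).

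Condition (i), namely \eqref{eqn:m-contractive-orbits}, I would obtain from the hypothesis \eqref{eqn:m-contractive-in-generalized-Proinov} by a shift. Given $\e>0$, take the $\de>0$ and $N$ provided by \eqref{eqn:m-contractive-in-generalized-Proinov}. For $p,q\geq N$, apply that implication to the points $u=T^{p-N}x$ and $v=T^{q-N}x$; since $T^Nu=T^px$ and $T^{N+1}u=T^{p+1}x$ (and similarly for $v$), this reads precisely $m(T^px,T^qx)<\de+\e\Rightarrow d(T^{p+1}x,T^{q+1}x)\leq\e$. Thus \eqref{eqn:m-contractive-orbits} holds with the same $\de$ and $N$. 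Theorem~\ref{thm:m-contractive-T-produces-Cauchy} now yields that $\{T^nx\}$ is Cauchy, and completeness provides a point $z$ to which $\{T^nx\}$ converges in the strong sense.

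The heart of the argument is to show $Tz=z$. The shifted sequence $\{T^{n+1}x\}$ is a tail of $\{T^nx\}$, hence Cauchy, and $d(z,T^nx)\to0$ gives $d(z,T^{n+1}x)\to0$, so $\{T^{n+1}x\}$ converges to $z$ in the strong sense; the constant sequence $Tz$ trivially converges to $Tz$ in the strong sense. Proposition~\ref{prop:d-is-continuous} then gives $d(z,Tz)=\lim_{n\to\infty}d(T^{n+1}x,Tz)$. On the other hand, sequential continuity applied to $\{T^nx\}\to z$ yields $\{T^{n+1}x\}=\{T(T^nx)\}\to Tz$, that is $d(T^{n+1}x,Tz)\to0$, whence $d(z,Tz)=0$ and $z=Tz$. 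I expect the delicate point to be exactly this reconciliation of the strong (metric) limit with the limit forced by sequential continuity: in a $\nu$-generalized metric space the topology need not be compatible, so Proposition~\ref{prop:d-is-continuous} is essentially the only device available to bridge the two notions.

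Finally, uniqueness is easy. If $z$ and $w$ are fixed points with $z\neq w$, then $d(z,Tz)=d(w,Tw)=0$, so $m(z,w)=d(z,w)$, and the contractive inequality $d(Tz,Tw)<m(z,w)$ becomes $d(z,w)<d(z,w)$, a contradiction; hence the fixed point is unique. Since the argument above produces, for every starting point $x$, a fixed point realized as the strong limit of $\{T^nx\}$, all these limits coincide with the unique fixed point $z$, completing the proof.
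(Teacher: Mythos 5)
Your proposal is correct and follows essentially the same route as the paper's own proof: verify the hypotheses of the paper's Cauchy criteria along the orbit, invoke completeness, and use sequential continuity to identify the limit as a fixed point, with uniqueness from the strict contractive inequality. In fact you are more careful than the paper at two points where it is silent --- the paper never spells out the shift argument that converts hypothesis \eqref{eqn:m-contractive-in-generalized-Proinov} into the orbit condition \eqref{eqn:m-contractive-orbits}, and it cites Theorem \ref{thm:main} directly (which strictly requires the iterates $T^nx$ to be pairwise distinct, a degenerate case your detour through Theorem \ref{thm:m-contractive-T-produces-Cauchy} handles), nor does it detail the reconciliation of the strong limit with the limit forced by sequential continuity via Proposition \ref{prop:d-is-continuous}, which you carry out correctly.
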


\begin{proof}
  First, let us prove that $T$ has at most one fixed point. If $Ty=y$ and $Tz=z$.
  Then $\m(y,z)=d(y,z)=d(Ty,Tz)$. Hence $y=z$.

  Now, choose $x\in X$ and set $x_n=T^nx$ $(n\in\N)$.
  Since $T$ is assumed to be asymptotically regular, we have
  $d(x_n,x_{n+1})\to0$. Hence, \eqref{eqn:limsup m <= limsup d}
  holds, for any two subsequences $\{x_{p_i}\}$ and $\{x_{q_i}\}$.
  By Theorem \ref{thm:main}, the sequence $\{T^nx\}$ is Cauchy and,
  since $X$ is complete, it converges to some point $z\in X$.
  Since $T$ is sequentially continuous, we have $Tz=z$.
\end{proof}

%

\end{document}